\documentclass[12pt,reqno]{amsart}

\usepackage[utf8]{inputenc}
\usepackage{xcolor}




\usepackage{xcolor}


\theoremstyle{plain}
\newtheorem{theorem}{Theorem}
\newtheorem*{remark}{Remark}
\newtheorem{proposition}{Proposition}[section]

\newtheorem{example}{Example}

\numberwithin{equation}{section}

\DeclareMathOperator{\arcsinh}{\mathrm{arcsinh}}
\DeclareMathOperator{\arctanh}{\mathrm{arctanh}}

\begin{document}

	\title[Solutions to the harmonic map problem between pseudo-Riemannian surfaces]{Solutions to the harmonic map problem between pseudo-Riemannian surfaces
	}
	
	\author{A. Fotiadis}
	
	\email{A. Fotiadis:  fotiadisanestis@math.auth.gr}
	\author{C. Daskaloyannis}
	\email{C. Daskaloyannis: daskalo@math.auth.gr }
	\curraddr{
		Department of Mathematics, Aristotle University of Thessaloniki,
		Thessaloniki 54124, Greece}
	\date{}
	\subjclass[2010]{58E20, 53C43, 35J60, 58J90}
	\keywords{harmonic maps, wave maps, sigma models, Lorentz surfaces, Riemann surfaces, pseudo-Riemannian, Beltrami equations, sine-Gordon equation, sinh-Gordon equation, B\"{a}cklund transformation. }
	

	\begin{abstract}
		We study locally harmonic maps between a Riemann surface or Lorentz surface $M$ and a Riemann surface or Lorentz surface $N$. {All four cases are studied in a unified way}. All four cases are written using a unified formalism. Therefore solutions to the harmonic map problem can be studied in a unified way.
		
		Harmonic maps between pseudo-Riemannian surfaces are classified by the classification of the solutions of a generalized sine-Gordon equation. We then study the one-soliton solutions of this equation and we find the corresponding harmonic maps in a unified way.
		
		Next, we discuss a B\"acklund transformation of the harmonic map equations that provides  a connection between the solutions of two sine-Gordon type equations. Finally, we give an example of a harmonic map that is constructed by the use of a B\"acklund transformation.
	\end{abstract}
	
	\maketitle

	\section{Introduction and Statement of the Results}\label{Introduction}

	The theory of harmonic maps between surfaces has been studied extensively. Harmonic maps in the case when the domain and/or the target is a {Lorentz} surface have been studied much, because of the importance of {Lorentz} spaces in mathematical physics. The construction of harmonic maps from a Riemann or Lorentz surface to a Riemann or Lorentz surface has many applications. We {call} pseudo-Riemannian a surface that is either a Riemann or Lorentz surface.

	The aim of this article is to develop a method to construct harmonic diffeomorphisms between a Riemann surface or a Lorentz surface $M$ and a Riemann surface or a Lorentz surface $N$. The case when $N$ is of constant curvature is studied in more detail. Our results cover all cases of negative and positive constant curvature in a unified way, stemming from the study of a generalized sine-Gordon equation (that is either an elliptic or hyperbolic sine-Gordon or sinh-Gordon equation).

	To state our results we first need to introduce some notation. Let $u=(R,S): {M\rightarrow N}$, where $M$ (resp. $N$) is a Riemann or Lorentz surface with a metric $g$ (resp. $h$). Consider an isothermal coordinate system $(x,y)$ on $M$ such that
	\begin{equation*}
		g=e^{f(v,w)} dv dw,
	\end{equation*} 
	where $v=x+\epsilon y, w=x-\epsilon y, \,\epsilon=1, i.$ Thus, \[g=e^{f(x,y)}(dx^2-\epsilon^2 dy^2).\] The case $\epsilon=1$ (resp. $\epsilon=i$) corresponds to the case of Lorentz (resp. Riemann) surface $M$. 
	Similarly, consider an isothermal coordinate system $(R,S)$ on $N$ such that
	\begin{equation*}
		h =e^{F(V,W)}  dV dW,
	\end{equation*}
	where $V=R+\delta S, W=R-\delta S, \,\delta=1, i.$ Thus, \[h=e^{F(R,S)}(dR^2-\delta^2 dS^2).\] The case $\delta=1$ (resp. $\delta=i$) corresponds to the case of Lorentz (resp. Riemann) surface $N$. 
	
	We say that $u=(R,S)$ is \emph{a harmonic map }if it satisfies  the Euler-Lagrange equation for the energy functional 
	
	\begin{align*}
	E(u)&=\frac{1}{2}\int_{M} \, g^{ij}(p)h_{a b}(u(p))u^{a}_{i}(p)u^{b}_{j}(p)dp \\
	&=\int_{M}  \, e^{F(V,W)}(V_v W_{w}+V_{w} W_v) \, dv dw,
\end{align*}
	where 
		\begin{align*}\label{eq:Vv Vw}
		V_{v}&=\frac{1}{2}(V_{x}+\frac{1}{\epsilon}V_{y})=\frac{1}{2}(R_{x}+\frac{1}{\epsilon}R_{y}+\delta S_{x}+\frac{\delta}{\epsilon}S_{y}),\\
		 V_{w}&=\frac{1}{2}(V_{x}-\frac{1}{\epsilon}V_{y})=\frac{1}{2}(R_{x}-\frac{1}{\epsilon}R_{y}+\delta S_{x}-\frac{\delta}{\epsilon}S_{y}),\\
		 W_{v}&=\frac{1}{2}(W_{x}+\frac{1}{\epsilon}W_{y})=\frac{1}{2}(R_{x}+\frac{1}{\epsilon}R_{y}-\delta S_{x}-\frac{\delta}{\epsilon}S_{y}),\\
		  W_{w}&=\frac{1}{2}(W_{x}-\frac{1}{\epsilon}W_{y})=\frac{1}{2}(R_{x}-\frac{1}{\epsilon}R_{y}-\delta S_{x}+\frac{\delta}{\epsilon}S_{y}).
		\end{align*}

	Note that 
	\[
	{V_v W_{w}+V_{w} W_v} =\frac{1}{2}(R_{x}^2-\frac{1}{\epsilon^2}R_{y}^2-\delta^2 S_{x}^2+\frac{\delta^2}{\epsilon^2}S_{y}^2),
	\]
	thus in the case $\epsilon=\delta=i$ we get above what is known as the energy functional for a map between {Riemann} surfaces in isothermal coordinates.
	
	The \emph{generalized sine-Gordon} that we are interested in, is the equation 
	\[
	\frac{\delta}{\epsilon}\omega_{vw} =-\dfrac{ {K}_{N}}{2}\sinh{(2\frac{\delta}{\epsilon}\omega)},{ \qquad \delta, \,\epsilon \in \{ 1,i\},}
	\]
	The generalized sine-Gordon equation in Cartesian coordinates is as follows:

	\[\frac{\delta}{\epsilon}(\omega_{xx}-\frac{1}{\epsilon^2}\omega_{yy})=-2K_{N} \sinh(2\frac{\delta}{\epsilon}\omega).\]

	\emph{A harmonic map is said to be nontrivial if $V_v {W}_v  V_w  {W}_w \neq 0$. We shall assume that all maps are nontrivial}. When $\epsilon=\delta=i$ then $ V_{w}\equiv 0$ if $u$ is holomorphic and  $V_{v} \equiv 0$ if $u$ is antiholomorphic. Throughout this article, we also assume that the Jacobian of the harmonic map does not vanish and that the coordinates are the \emph{specific} ones (see next section for more details). The main results in this article are {the following.}
	
	\begin{theorem}\label{th:direct}
		Let $u=(R,S): M \to N$ be a harmonic map and let $V=R+\delta S, \, W=R-\delta S.$ Then, $(V,W)$ satisfy the  Beltrami equations:  
		\begin{equation*}\label{eq:Beltrami_Equation_specific1}
			\frac{V_w}{V_v}=  e^{ -2 \Omega}\,  ,\quad  
			\frac{W_{v}}{W_{w}}= e^{ -2 \Omega}\,  ,
		\end{equation*}
		where 
		\[
		\Omega=\frac{\delta}{\epsilon}\omega
		\]
		for a real valued function $\omega=\omega(x,y)$.
		Furthermore, 
		\[K_{N }=-\dfrac{2 \Omega_{vw}}{\sinh{(2\Omega)}},
		\]
		where $K_{N }$ is the curvature of the target manifold $N $.
	\end{theorem}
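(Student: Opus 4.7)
The plan is to derive both Beltrami equations and the curvature identity directly from the Euler--Lagrange system of the energy functional, written in the null coordinates $(v,w)$ on $M$ and $(V,W)$ on $N$. Because both metrics are isothermal in these charts, the domain conformal factor $e^{f}$ drops out, and the harmonic map equations reduce, uniformly across the four signatures $(\epsilon,\delta)\in\{1,i\}^{2}$, to
\begin{equation*}
V_{vw}+F_{V}(V,W)\,V_{v}V_{w}=0,\qquad W_{vw}+F_{W}(V,W)\,W_{v}W_{w}=0.
\end{equation*}

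Next I would extract the two Hopf-type conservation laws
\begin{equation*}
\partial_{w}\!\bigl(e^{F}V_{v}W_{v}\bigr)=0,\qquad \partial_{v}\!\bigl(e^{F}V_{w}W_{w}\bigr)=0,
\end{equation*}
which follow from a direct application of the chain rule to $e^{F(V,W)}$ combined with the harmonic map equations above. Thus $e^{F}V_{v}W_{v}$ depends only on $v$ and $e^{F}V_{w}W_{w}$ depends only on $w$. The \emph{specific}-coordinate hypothesis (introduced in the next section) is precisely the normalization allowing one to rescale $v$ and $w$ so that both of these quantities equal the same constant, which I take to be $1$. This pins down the pointwise identity $V_{v}W_{v}=V_{w}W_{w}$, which rearranges as $V_{w}/V_{v}=W_{v}/W_{w}$, and I define $\Omega$ by $e^{-2\Omega}=V_{w}/V_{v}$; this is the pair of Beltrami equations in the statement. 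To show that $\Omega$ has the form $(\delta/\epsilon)\,\omega$ with $\omega$ real, I would substitute the explicit expressions for $V_{v},V_{w},W_{v},W_{w}$ in terms of the real fields $R,S$ (listed immediately after the definition of the energy) and check case by case that the argument of $V_{w}/V_{v}$ is the expected multiple of $\delta/\epsilon$.

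For the curvature identity, I would differentiate $2\Omega=\log V_{v}-\log V_{w}$ twice, substituting $V_{vw}=-F_{V}V_{v}V_{w}$ (and its $v$- and $w$-derivatives) at each step. After the terms containing $F_{VV}$ and $F_{V}^{2}$ cancel---this cancellation is the key algebraic point---what survives is
\begin{equation*}
\Omega_{vw}=\tfrac{1}{2}\,F_{VW}\,\bigl(V_{v}W_{w}-V_{w}W_{v}\bigr).
\end{equation*}
Using the Beltrami equations, the Jacobian factors as $V_{v}W_{w}-V_{w}W_{v}=2\,V_{v}W_{w}\,e^{-2\Omega}\sinh(2\Omega)$; combined with the specific-coordinate normalization $e^{F}V_{w}W_{w}=1$ (which turns $V_{v}W_{w}e^{-2\Omega}=V_{w}W_{w}$ into $e^{-F}$) and with the standard isothermal-metric formula $K_{N}=-2\,e^{-F}F_{VW}$, this becomes $\Omega_{vw}=-\tfrac{K_{N}}{2}\sinh(2\Omega)$, which rearranges to the displayed expression for $K_{N}$.

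The main obstacle will be the specific-coordinate step: the two Hopf-type conservation laws are a priori independent functions of $v$ and of $w$, and it is precisely the normalization making them both equal to $1$ that ties them into the single Beltrami equation and that converts the $F_{VW}$-factor into a multiple of $\sinh(2\Omega)$ at the end. The verification that $\Omega$ has the form $(\delta/\epsilon)\,\omega$ with $\omega$ real is then a routine four-case check over $(\epsilon,\delta)\in\{1,i\}^{2}$, but it must be carried out explicitly to complete the unified statement.
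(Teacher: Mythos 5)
Your proposal is correct: it rests on the same two pillars as the paper's proof (the Hopf-type conservation laws $\partial_w(e^FV_vW_v)=0$, $\partial_v(e^FV_wW_w)=0$ normalized to $1$ in the specific coordinates, and the identity $\Omega_{vw}=\tfrac12 F_{VW}(V_vW_w-V_wW_v)$ obtained by differentiating $2\Omega=\log V_v-\log V_w$ against the harmonic map system), and your algebra checks out, including the cancellation of the $F_{VV}$ and $F_V^2$ terms and the factorization $V_vW_w-V_wW_v=2V_vW_we^{-2\Omega}\sinh(2\Omega)$ with $V_vW_we^{-2\Omega}=V_wW_w=e^{-F}$. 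The one genuine divergence is how you reach the Beltrami equations: the paper first converts the normalized Hopf identities into the orthogonality and sum-of-squares relations for $R_x,R_y,S_x,S_y$ and then introduces the explicit parametrization $R_x=2e^{-F/2}\cosh\Omega\cosh\Theta$, etc., from which $V_v,V_w,W_v,W_w$ take the form $2e^{-F/2}e^{\pm\Omega\pm\Theta}$ and the Beltrami equations are read off; you instead observe that $V_vW_v=V_wW_w$ forces $V_w/V_v=W_v/W_w$ and simply define $\Omega$ by $e^{-2\Omega}=V_w/V_v$. Your route is shorter and more economical for Theorem~\ref{th:direct} alone, but the paper's parametrization in terms of the pair $(\Omega,\Theta)$ is not wasted effort: it is exactly what feeds the first-order system behind Proposition~\ref{prop:Baecklund} and the B\"acklund transformation of Section~\ref{sec:Backlund}. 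The only place where you (like the paper) leave work implicit is the four-case verification that $\Omega=(\delta/\epsilon)\omega$ with $\omega$ real; this does go through (e.g.\ for $\epsilon=\delta=i$ one has $W_v=\overline{V_w}$, $W_w=\overline{V_v}$, so $V_w/V_v=e^{-F}/|V_v|^2>0$), and you correctly flag that it must be carried out to complete the unified statement.
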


	These equations are well known for the case of a harmonic map from a Riemann surface to a Riemann surface \cite{FotDask}. In the present work we have four distinct cases depending on whether the domain and/or the target is a Riemann or a Lorentz surface.

	\begin{theorem}\label{Prop:Beltrami_to_harmonic}
		Consider the diffeomorphism $u=(R,S): M \to N$. Let and $V=R+\delta S, \, W=R-\delta S, \,v=x+\epsilon y, \, w=x-\epsilon y, \, \epsilon, \,\delta
		\in \{1, i\}$. If $(V,W)$ satisfy the Beltrami equations
		\begin{equation*}\label{Beltrami2}
			\frac{V_w}{V_v}=  e^{ -2 \Omega}\,  ,\quad  
			\frac{W_{v}}{W_{w}}= e^{ -2 \Omega}\,  ,
		\end{equation*}
		where 
		\[
		\Omega=\frac{\delta}{\epsilon}\omega
		\]
		for a real valued function $\omega=\omega(x,y)$, then $N$ can be equipped with a conformal metric such that $u$ is a harmonic map and the curvature of the target manifold $N $ is
		\[
		\tilde{K}_{N}=-\dfrac{2 \Omega_{vw}}{\sinh{(2\Omega)}}.
		\]
	\end{theorem}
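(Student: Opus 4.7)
The goal is to produce a conformal factor $F(V,W)$ on $N$ such that $h = e^F\, dV\, dW$ makes $u$ a harmonic map; the curvature identity will then follow by quoting Theorem~\ref{th:direct}. In isothermal null coordinates on both surfaces, and using the conformal invariance of harmonicity in the two-dimensional domain metric, the conditions that $u$ be harmonic into $(N, e^F\, dV\, dW)$ reduce to the familiar system
\[
V_{vw} + F_V\, V_v V_w = 0,\qquad W_{vw} + F_W\, W_v W_w = 0.
\]
Under the nontriviality hypothesis $V_v W_v V_w W_w \neq 0$ these may be solved for $F_V$ and $F_W$, so the only candidate for $dF$ on $N$ is the $1$-form
\[
\alpha \;:=\; -\frac{V_{vw}}{V_v V_w}\, dV \;-\; \frac{W_{vw}}{W_v W_w}\, dW.
\]
The heart of the matter is to show that $\alpha$ is closed on $N$, for then the Poincar\'e lemma will produce the sought $F$.

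To verify closedness I would pull $\alpha$ back to $M$ via $u$. Writing $dV = V_v\, dv + V_w\, dw$ and $dW = W_v\, dv + W_w\, dw$ and regrouping yields
\[
u^*\alpha \;=\; -\,\partial_v \log(V_w W_w)\, dv \;-\; \partial_w \log(V_v W_v)\, dw,
\]
so the vanishing of $d(u^*\alpha)$ collapses to the single identity $\partial_{vw} \log\bigl(V_w W_w/(V_v W_v)\bigr) = 0$. Here is precisely where the Beltrami equations do the work: multiplying them gives
\[
\frac{V_w}{V_v}\cdot\frac{W_w}{W_v} \;=\; e^{-2\Omega}\cdot e^{2\Omega} \;=\; 1,
\]
so the ratio inside the logarithm is identically $1$ and closedness is automatic. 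Since $u$ is a diffeomorphism, $u^{*}$ is injective on forms and $\alpha$ itself is closed on $N$; the Poincar\'e lemma then produces a local $F(V,W)$ with $dF = \alpha$, and by construction $u$ is harmonic for the conformal metric $h = e^F\, dV\, dW$.

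With the harmonic structure in hand and the Beltrami equations still holding with the originally prescribed $\Omega$, Theorem~\ref{th:direct} applies verbatim and delivers $\tilde K_N = -2\Omega_{vw}/\sinh(2\Omega)$, completing the proof. The one place I expect to require care is exactly the integrability check $d\alpha = 0$, and as sketched above it dissolves into the single Beltrami-driven identity $V_w W_w = V_v W_v$; everything else is formal, with the nontriviality hypothesis guaranteeing that the denominators appearing along the way remain nonzero on a small enough coordinate patch where the Poincar\'e lemma applies.
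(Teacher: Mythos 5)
Your integrability argument is sound and is, in substance, the same mechanism the paper uses: multiplying the two Beltrami equations gives $V_wW_w=V_vW_v$, and your $1$-form $\alpha$ then integrates to $F=-\log(V_vW_v)+c$, which is exactly the paper's explicit choice $e^{\tilde F}=e^{2\Omega}/(V_vW_w)=1/(V_vW_v)$. (The identity $V_wW_w=V_vW_v$ also kills the cross term $\tfrac{1}{\epsilon}(R_xR_y-\delta^2S_xS_y)$, so it simultaneously shows $V_vW_v$ is real; you should say a word about this and about the sign, since $e^F\,dV\,dW$ must be a genuine real conformal metric -- the paper handles this by noting $R_xR_y-\delta^2S_xS_y=0$ and assuming without loss of generality that $e^{2\Omega}/(V_vW_w)>0$.)

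The genuine gap is in your last step, ``Theorem~\ref{th:direct} applies verbatim.'' The Poincar\'e lemma determines $F$ only up to an additive constant $c$, and the target curvature $\tilde K_N=-2F_{VW}e^{-F}$ is \emph{not} invariant under $F\mapsto F+c$: it rescales by $e^{-c}$, while $\Omega$ is fixed once and for all by the Beltrami equations. So the family of metrics $e^{F+c}\,dV\,dW$ all make $u$ harmonic, but only one of them has curvature $-2\Omega_{vw}/\sinh(2\Omega)$. Moreover, the proof of Theorem~\ref{th:direct} is carried out in the \emph{specific} coordinate system, i.e.\ under the normalization $e^{F}V_vW_v=e^{F}V_wW_w=1$, and your $F$ need not satisfy this. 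The repair is one line -- choose the constant of integration so that $e^{F}V_vW_v=1$, which is possible precisely because $V_vW_v$ is real and of one sign -- but as written the constant is left free and the stated curvature formula does not follow. This normalization is exactly what the paper's explicit definition $e^{\tilde F}=e^{2\Omega}/(V_vW_w)$ builds in from the start, after which harmonicity is read off from $e^{\tilde F}V_vW_v=1$ via Proposition~\ref{prop:Hopf} rather than by integrating a closed form.
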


	As a consequence, there is a classification of  harmonic diffeomorphisms via the classification of the  solutions of the generalized sine-Gordon equation and our results extend the results in \cite{FotDask}, which are referred to harmonic maps between Riemann surfaces.

	In \cite{FotDask} some examples of harmonic maps between Riemann surfaces of constant curvature were given. These examples were already known in the literature {\cite{Li-Ta2, S-T-W, Wolf1, Wolf2},} but in \cite{FotDask} there is a unified point of view to all these examples. Using one-soliton solutions of the generalized sine-Gordon equation we construct examples of harmonic maps to constant curvature (Lorentz or Riemann) surfaces. Elliptic functions arise as in \cite{FotDask} in course of this construction. We study positive, negative and zero constant curvature surfaces in a unified way, and we find concise formulas that provide new examples of harmonic maps.

	Finally, a B{\"a}cklund transform arises, which provides a connection between the solutions of two sine-Gordon type equations. As an application, we construct a new harmonic map in Section \ref{sec:Backlund}, that does not correspond to the one-soliton solutions of the generalized sine-Gordon equation.

	Let us now present an outline of the article. In Section \ref{sec:Background}, we start with some prelimaries and some necessary formulas as well as the {proofs of Theorems \ref{th:direct} and \ref{Prop:Beltrami_to_harmonic}.} In Section \ref{sec:Constant}, we study the constant curvature case. We find the one-soliton solutions of the generalized sine-Gordon equation and the corresponding harmonic maps. In Section \ref{sec:Backlund} we discuss the B\"acklund transform and by using this transform we construct a new harmonic map. Finally, Section \ref{sec:prespectives} contains some perspectives for future research.

	Complete surfaces of constant negative curvature immersed in the Euclidean 3-space are known as Pseudospherical Surfaces and they have been  studied as solutions of the sine-Gordon equation. There is a correspondence between Pseudospherical Surfaces and Lorentzian Harmonic Maps. So, the problem of classifying these surfaces reduces to solving the harmonic map problem \cite{DB}. On the other hand, the study of surfaces with constant curvature
	in Minkowski space is being developed thanks to its relationship with some classical equations such as
	sinh-Gordon, cosh-Gordon, sin-Laplace, and its associated B{\"a}cklund and Darboux transformations.
	
	The cases of {a} Riemann or Lorentz image surface with constant curvature have a special importance.  In this case we end up solving a sine-Gordon type equation, which plays an important role in many branches of mathematics and theoretical physics (see for example \cite{sinGordon}).  The sinh-Gordon  and sine-Gordon equations and their elliptic analogues have many applications and they have both been the subject of extensive study (see for example \cite{F-P,H}). The sinh-Gordon equation was also crucial to the breakthrough work \cite{Wente} on the Wente torus.

	There is a Kenmotsu-type representation for timelike surfaces with prescribed Gauss
	map in the three-dimensional Minkowski space. The complete timelike surfaces with positive constant curvature are classified in terms of harmonic diffeomorphisms between simply connected Lorentz surfaces and the universal cover of the de Sitter Space \cite{AEG}. 
	
	Wave maps (or Lorentzian-harmonic maps) from a 1+1-dimensional Lorentz space into the 2-sphere are associated to constant negative curvature surfaces in Euclidean 3-space via the Gauss map, which is harmonic with respect to the metric induced by the second fundamental form. The Cauchy problem for the wave map equation has been studied extensively over the past three decades. Gu showed the global existence of smooth wave maps from 2-dimensional Minkowski spacetime into a complete Riemannian manifold \cite{Gu}. Note finally that it is known that there is a relation between the harmonic map equation and anti-de Sitter geometry, see for example \cite{BS}.
	
	\section{Preliminaries and Main Results}\label{sec:Background}	
	
	In this section we shall start with some preliminaries and some necessary formulas. More precisely, we shall define isothermal coordinates and harmonic maps. Next, we shall find a necessary and sufficient condition for a map to be harmonic and we shall define the specific coordinates. Finally, we shall prove our main results. As a consequence, we shall derive a new B\"acklund transform. 
	\subsection{Isothermal Coordinates}

	Let {$u: (M,g) \rightarrow (N,h)$} be a map between a Riemannian surface or Lorentzian surface $M$ and a Riemannian surface or Lorentzian surface $N$.  The map $u$ is locally represented by $u=(R,S)$.
	
	It is known that isothermal coordinates on an arbitrary  {Riemann or Lorentz surface with a real analytic metric}  exist (see \cite[Section 8, p. 396]{J} and \cite[p. 111]{Larsen}). Consider an isothermal coordinate system $(x,y)$ on $M$ such that
	\begin{equation*}
		g=e^{f(x,y)}(dx^2 - \epsilon^2 dy^2)=e^{f(v,w)} dv dw,
	\end{equation*} 
	where $v=x+\epsilon y,\, w=x-\epsilon y,  \,\epsilon
	\in \{1, i\}.$
	Similarly, consider an isothermal coordinate system $(R,S)$ on $N$ such that
	\begin{equation*}
		h =e^{F(R,S)}(dR^2 - \delta^2 dS^2)=e^{F(V,W)}  dV dW,
	\end{equation*}
	where  
	\[
	V=R+\delta S, { \quad W =R-\delta S, \quad \delta\in \{1, i\}.}
	\]
	
	The curvature on the target is given by \cite{I-ENEA-CRE}:
	\begin{equation*}\label{curvature}
		K_N
		=-2F_{V W} e^{-F}=-\dfrac{1}{2}\left(F_{RR}-{\delta^2}F_{SS} \right)\, e^{-F(R,S)}. 
	\end{equation*} 
	
	\subsection{Harmonic Maps and the specific coordinate system}  \label{section Harmonic Maps}
	
	In the case of isothermal coordinates (see \cite[Section 8, p. 397]{J} and \cite[p. 186]{Tataru}),  a map $u=(R,S)$ is harmonic if it satisfies  the Euler-Lagrange equation for the energy functional \[
	E(u)=\frac{1}{2}\int_{M}  \, e^{F(V,W)}(V_v W_{w}+V_{w} W_v) \, dv dw,
	\]
	where in these coordinates the metrics on $M$ and $N$ are $g=e^{f(v,w)} dv dw$ and $h=e^{F(V,W)} dV dW$ respectively.

	\begin{proposition}\label{prop: eqharmonic}
		A map $u=(R,S)$ is harmonic if it satisfies the system
		\begin{align}\label{eq:harmonic_map}
			V_{vw} + F_{V}(V,W)\, V_{v} {V}_{w}&=0 \\
			\label{eq:harmonic_map_1}
			W_{vw} + F_{W}(V,W)\, W_{v} W_{w}&=0,
		\end{align}
		{where $V=R+\delta S$, $W=R-\delta S$.}
	\end{proposition}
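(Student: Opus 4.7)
The plan is to derive the stated equations as the Euler--Lagrange equations for the energy functional
\[
E(u)=\int_{M} e^{F(V,W)}\bigl(V_v W_w+V_w W_v\bigr)\,dv\,dw,
\]
working directly in the $(V,W)$ variables. Since $V=R+\delta S$ and $W=R-\delta S$ with $\delta$ a nonzero constant, varying $u$ through $(R,S)$ is equivalent to independently varying $V$ and $W$; hence $u$ is a critical point of $E$ iff the Euler--Lagrange equations for the Lagrangian
\[
L(V,W,V_v,V_w,W_v,W_w)=e^{F(V,W)}\bigl(V_v W_w+V_w W_v\bigr)
\]
with respect to both $V$ and $W$ hold.

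The first step is to assemble the partial derivatives of $L$. A direct computation gives
\[
\frac{\partial L}{\partial V}=F_V\,e^{F}\bigl(V_v W_w+V_w W_v\bigr),\qquad
\frac{\partial L}{\partial V_v}=e^{F}W_w,\qquad
\frac{\partial L}{\partial V_w}=e^{F}W_v,
\]
and analogous expressions for derivatives in $W$ with the roles of $V$ and $W$ swapped. The second step is to apply the Euler--Lagrange operator $\partial_V-\partial_v\partial_{V_v}-\partial_w\partial_{V_w}$ and use the chain rule, noting that $\partial_v(e^F)=e^F(F_V V_v+F_W W_v)$ and similarly for $\partial_w(e^F)$. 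Expanding these mixed derivatives produces four cross-terms involving $F_V$, $F_W$, $V_v W_w$, $V_w W_v$, $W_v W_w$ together with one second-order term $e^{F}(W_{vw}+W_{wv})=2e^{F}W_{vw}$.

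The third step is the algebraic simplification: the two $F_V$ cross-terms exactly cancel the contribution of $\partial L/\partial V$, and what survives is
\[
-2e^{F}\bigl(W_{vw}+F_W W_v W_w\bigr)=0,
\]
giving \eqref{eq:harmonic_map_1}. Repeating the argument with the roles of $V$ and $W$ interchanged yields \eqref{eq:harmonic_map}. The only real obstacle is keeping track of the signs and the pairing of terms coming from the product $V_v W_w+V_w W_v$; once the two $F_V$ cross-terms are recognized as canceling the $\partial L/\partial V$ contribution, the rest is mechanical. Conversely, any solution of the system~\eqref{eq:harmonic_map}--\eqref{eq:harmonic_map_1} satisfies the Euler--Lagrange equations by the same computation in reverse, so the characterization is in fact an equivalence.
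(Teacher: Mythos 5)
Your proposal is correct and follows essentially the same route as the paper: both derive \eqref{eq:harmonic_map} and \eqref{eq:harmonic_map_1} as the Euler--Lagrange equations of the Lagrangian $e^{F(V,W)}(V_vW_w+V_wW_v)$ in the variables $V,W$; the paper merely states that ``a simple calculation reveals'' the result, whereas you carry out that calculation (correctly, including the observation that varying $V$ produces the equation for $W$ and vice versa).
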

	
	\begin{proof}
		
		Consider the Lagrangian
		\[\mathcal{L}(v,w,V,W,V_v,V_{w},W_v,W_{w})=e^{F(V,W)}(V_v W_{w}+V_{w} W_v).\]
		The Euler-Lagrange equations for the energy functional are
		\[
		\frac{\partial\mathcal{L}}{\partial V}= \left(\frac{\partial \mathcal{L}}{\partial V_v} \right)_v+\left( \frac{\partial \mathcal{L}}{\partial V_{w}}\right)_{w}
		\]
		and
		\[
		\frac{\partial\mathcal{L}}{\partial W}= \left(\frac{\partial \mathcal{L}}{\partial W_v} \right)_v+\left( \frac{\partial \mathcal{L}}{\partial W_{w}}\right)_{w} .
		\]
		A simple calculation reveals that the Euler-Lagrange equations are the equations in (\ref{eq:harmonic_map}) and  (\ref{eq:harmonic_map_1}). 
	\end{proof}
	Thus {\eqref{eq:harmonic_map} and \eqref{eq:harmonic_map_1}} are the equations of a harmonic map, and the one equation is `conjugate' to the other. 
	Observe that the above proposition is valid for any kind of harmonic map, as a map from a {Riemann or Lorentz domain surface to some Riemann or Lorentz} target surface.

	From now on we shall only consider nontrivial harmonic maps with nonvanishing determinant.
	
	\begin{proposition}\label{prop:Hopf}
		A necessary and sufficient condition for  $u$ to be a harmonic map, is that 
		\begin{equation}\label{eq:Hopf}
			e^{F(V,W)}  V_v  {W}_v={\Lambda(v)}
		\end{equation}	
		and   
		\begin{equation}\label{eq:Hopf_1}	
			e^{F(V,W)}V_{w}   {W}_{w}={M({w})}
		\end{equation}
		where $\Lambda=\Lambda(v)$ is a smooth function of $v$, and $M=M(w)$ {is} its `conjugate'.
		
	\end{proposition}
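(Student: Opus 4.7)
The plan is to prove both directions by differentiating the Hopf-type quantities $e^{F}V_v W_v$ and $e^{F}V_w W_w$ and recognizing the harmonic map equations \eqref{eq:harmonic_map}--\eqref{eq:harmonic_map_1} inside the result.

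First I would verify the forward direction. Assuming $u$ is harmonic, compute
\begin{align*}
\partial_w\!\left(e^{F(V,W)} V_v W_v\right)
&= e^{F}\bigl(F_V V_w + F_W W_w\bigr) V_v W_v + e^{F}V_{vw} W_v + e^{F}V_v W_{vw} \\
&= e^{F}\Bigl[\,W_v\bigl(V_{vw}+F_V V_v V_w\bigr) + V_v\bigl(W_{vw}+F_W W_v W_w\bigr)\Bigr],
\end{align*}
using the product and chain rules. By Proposition \ref{prop: eqharmonic} both bracketed quantities vanish, so this $w$-derivative is zero, which forces $e^{F}V_v W_v$ to be a function of $v$ alone, i.e.\ $\Lambda(v)$. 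An entirely symmetric computation gives
\[
\partial_v\!\left(e^{F(V,W)} V_w W_w\right)
= e^{F}\Bigl[\,W_w\bigl(V_{vw}+F_V V_v V_w\bigr) + V_w\bigl(W_{vw}+F_W W_v W_w\bigr)\Bigr]=0,
\]
so $e^{F}V_w W_w$ depends only on $w$, yielding $M(w)$.

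For the converse, suppose \eqref{eq:Hopf} and \eqref{eq:Hopf_1} hold. Then the two derivatives above vanish, which gives the linear system
\[
\begin{pmatrix} W_v & V_v \\ W_w & V_w \end{pmatrix}
\begin{pmatrix} V_{vw}+F_V V_v V_w \\ W_{vw}+F_W W_v W_w \end{pmatrix}
= \begin{pmatrix} 0 \\ 0 \end{pmatrix}.
\]
The determinant of the coefficient matrix is $V_v W_w - V_w W_v$, which up to a nonzero factor of $\delta/\epsilon$ is the Jacobian of $u$ in the coordinates $(v,w)$; by our standing assumption that the Jacobian of the harmonic map does not vanish, this determinant is nonzero. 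Hence both harmonic map equations \eqref{eq:harmonic_map} and \eqref{eq:harmonic_map_1} follow, and $u$ is harmonic by Proposition \ref{prop: eqharmonic}.

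The only delicate point, and the one I would be careful about, is the converse: one needs precisely the non-vanishing Jacobian hypothesis to invert the $2\times 2$ system, because having only one of \eqref{eq:Hopf}--\eqref{eq:Hopf_1} is not enough to separately conclude each of the two Euler--Lagrange equations. Everything else is a direct and essentially mechanical computation.
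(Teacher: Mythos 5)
Your proof is correct and follows essentially the same route as the paper: differentiate the Hopf-type quantities, recognize the harmonic map equations of Proposition \ref{prop: eqharmonic} inside, and for the converse invert the $2\times 2$ system using the nonvanishing Jacobian $V_vW_w-V_wW_v=\frac{\delta}{\epsilon}(R_xS_y-S_xR_y)\neq 0$. Your remark about the converse requiring the Jacobian hypothesis is exactly the point the paper also relies on.
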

	\begin{remark} Notice that the above equations are valid for any kind of harmonic map. The domain manifold $M$ could be a {Riemann or a Lorentz surface and also the image manifold could be Riemann or Lorentz.}  This is a unified description  of the harmonic maps between surfaces.
	\end{remark}
	For convenience, we shall {sketch} the proof. 
	\begin{proof}
		If $u$ is a harmonic map then one can apply equations (\ref{eq:harmonic_map}) {and (\ref{eq:harmonic_map_1})} and prove that 
		\[
		({e^{F(V,W)}  V_v  {W}_v})_{w}= 0.
		\]
		Indeed, one can deduce that 
		\[
		{ ({e^{F(V,W)}  V_v  {W}_v})_{w}= e^F \left( W_v (V_{vw} + F_{V}V_{v} V_{w})+V_v \left(W_{vw} + F_{W} W_{v} W_{w} \right) \right)=0. }
		\]
		The proof of the formula 
	{	\[
		{ \left(e^{F(V,W)} V_{w}   {W}_{w}\right)}_{ v}= 0.
		\]}
		is similar, thus omitted. 
		
		If on the other hand 
		\[
		e^{F(V,W)}  V_v  {W}_v= {\Lambda(v)}  
		\mbox{ and }  
		e^{F(V,W)}V_{w}   {W}_{w}=M({w}),
		\]
		then differentiating these equations in terms of {$w$ and $v$} respectively, we find 
		\[ W_v (V_{v w} + F_{V}V_{v} V_{w})+ V_{v}(W_{vw} + F_{W} V_{v} W_{w})=0,\]
		and
		\[ W_{w} (V_{v w} + F_{V}V_{v} V_{w})+V_{w} (W_{v w} + F_{W} W_{v} W_{w})=0,\]
		
		Taking into account that 
		\[V_{v} W_{w}-W_v V_{w}=\frac{\delta}{\epsilon}(R_{x}S_{y}-S_{x}R_{y})\neq 0,\]
		{we obtain}  (\ref{eq:harmonic_map}) and (\ref{eq:harmonic_map_1}).
	\end{proof}

	{
	In the domain surface $M$ of a harmonic map, one can choose a specific coordinate system in order to considerably facilitate the calculations. Note that $u$ is non-trivial, thus $\Lambda(v)\neq 0$ and $M(w)\neq 0$. This \textit{specific}  system is defined by the transformations
	\begin{equation}\label{eq:specific}
		\zeta= \int \dfrac{1}{\sqrt{\Lambda(v)}} \, dv,  \quad
		\eta= \int \dfrac{1}{\sqrt{M(w)}} \, dw 
	\end{equation}  
}

{	In this specific system the equations (\ref{eq:Hopf}),\; (\ref{eq:Hopf_1}) can be simplified by substituting $\Lambda(v)=1$ and $M(w)=1.$ From now on, we shall consider this specific coordinate system. Then, 
	\begin{equation}\label{eq:holomorphic_map_specific}
		{e^{F(V,W)}}  V_v   W_v= 1\quad \mbox{and}\quad	{e^{F(V,W)}}  V_w   W_w= 1
	\end{equation}
} 
{	Observe now that equations (\ref{eq:holomorphic_map_specific}) can be written as follows:
	\begin{align}\label{eq:Hopf_2}
	\dfrac{e^{F(V,W)}}{4} \left( R_{x}^2+\dfrac{1}{\epsilon^2}R_{y}^2-\delta^2 S_{x}^2-\dfrac{\delta^2}{\epsilon^2}S_{y}^2
		+\dfrac{2}{ \epsilon}\left(  R_x R_y-\delta^2 S_x S_y \right)  \right) =1
	\end{align}
}
and   
{
	\begin{align}\label{eq:Hopf_3}
		 \dfrac{e^{F(V,W)}}{4}	 \left( R_{x}^2+\dfrac{1}{\epsilon^2}R_{y}^2-\delta^2 S_{x}^2-\dfrac{\delta^2}{\epsilon^2}S_{y}^2-
		\dfrac{2}{ \epsilon}\left(  R_x R_y-\delta^2 S_x S_y  \right)\right)=1.
	\end{align}
}
	The above equations imply that
	\begin{equation}\label{eq:orthogonality} 
		R_x R_y \, -\delta^2 \, S_x S_y=0
	\end{equation}
	and
	\begin{equation}\label{eq:squares}
		\left( R_{x}^2+\frac{1}{\epsilon^2}R_{y}^2\right)  -\delta^2  \left( S_{x}^2+\frac{1}{\epsilon^2}S_{y}^2 \right) =4e^{-F(R,S)}.
	\end{equation}
	Note that (\ref{eq:orthogonality})  means that this specific system is an orthogonal coordinate system. Then (\ref{eq:orthogonality}) and (\ref{eq:squares}) imply that in this specific system the following relations hold true
	\begin{align}\label{eq:cart}
		R_x &= 2 e^{-\frac{F}{2}}\cosh{\Omega}\cosh{\Theta}   \\
		R_y &= 2\, \epsilon \,e^{-\frac{F}{2}}\sinh{\Omega}\sinh{\Theta} \\
		S_x &= \frac{2}{\delta} e^{-\frac{F}{2}}\cosh{\Omega}\sinh{\Theta} \\
		S_ y&= \frac{2\epsilon}{\delta} e^{-\frac{F}{2}}\sinh{\Omega}\cosh{\Theta}, \label{eq:cart4}
	\end{align}	
	\label{cartesian_dRdS}
	for some functions $\Omega, \Theta$.
	A case by case study reveals that since $R,S$ are real valued functions then 
	\begin{equation*}\label{eq:DefOmega}
		\Omega=\frac{\delta}{\epsilon}\omega,\quad \Theta=\delta \theta,
	\end{equation*}
	where $\omega=\omega(x,y),\;  \theta=\theta(x,y)$ are real valued functions.

	\subsection{Proof of the main results\label{Proof of the main results}}
	\begin{proof}[Proof of Theorem \ref{th:direct}]
		{First of all, when } we consider a specific coordinate system then Proposition \ref{prop:Hopf} imply equations (\ref{cartesian_dRdS}). Then the solutions of the harmonic map equations satisfy the following equations
		\begin{equation*}\label{eq:Vx}
			\begin{array}{ll}
				V_x=  2\, e^{-\frac{F}{2}}\cosh{\Omega} e^{\Theta} , & 
				W_x=  2\, e^{-\frac{F}{2}}\cosh{\Omega} e^{-\Theta}\\
				V_y=  2\, \epsilon\, e^{-\frac{F}{2}}\sinh{\Omega} e^{\Theta} , &  
				W_y= - 2\,  \epsilon\, e^{-\frac{F}{2}}\sinh{\Omega} e^{-\Theta}\\ 
			\end{array}
		\end{equation*}
		and thus
		\begin{equation}\label{eq:Vv}
			\begin{array}{ll}
				V_{v}= \frac{1}{2} (V_x +\dfrac{1}{\epsilon}V_y) = 2 \,e^{-\frac{F}{2}} e^{\Omega+\Theta} , & 
				W_{v}= \frac{1}{2} (W_x +\dfrac{1}{\epsilon}W_y) = 2 \, e^{-\frac{F}{2}} e^{-\Omega-\Theta}\\
				V_{w}=\frac{1}{2} (V_x -\dfrac{1}{\epsilon}V_y) =  2  \, e^{-\frac{F}{2}} e^{-\Omega+\Theta} , &  
				W_{w}=\frac{1}{2} (W_x -\dfrac{1}{\epsilon}W_y) =  2  \, e^{-\frac{F}{2}} e^{\Omega-\Theta}.\\ 
			\end{array}
		\end{equation}

		From equation (\ref{eq:Vv}) we have the following relations
		\begin{align}
			\Omega + \Theta&=   \dfrac{F}{2}+\log{\dfrac{V_{v}}{2} } ,
			\label{eq:pOmegapTheta}
			\\
			-\Omega + \Theta&=   \dfrac{F}{2}+\log{\dfrac{V_{w}}{2} }.
			\label{eq:mOmegapTheta}
		\end{align} 
		
		Taking into consideration equations (\ref{eq:holomorphic_map_specific}), it follows that
		equations (\ref{eq:pOmegapTheta})  and  (\ref{eq:mOmegapTheta})  imply the relations
		\begin{align*}
			\Omega_{w} + \Theta_{w}&=   \dfrac{F_{w}}{2}-F_{V} V_{w}  ,
			\\
			-\Omega_{v}+ \Theta_{v}&=   \dfrac{F_{v}}{2}-F_{V}  V_{v} .
		\end{align*} 
		A lengthy calculation reveals that the following equations hold true:
		\begin{equation*}\label{eq:second_der_Omega_Theta}
			\begin{split}
				\Omega_{{v}{w}} + \Theta_{{v}{w}}=   \dfrac{F_{{{v}{w}}}}{2}-F_{V}  V_{{{v}{w}}}  - F_{VV}  V_{w}  V_{v}  - F_{VW} V_{{w}}  W_{{v}} 
				\\
				-\Omega_{{v} {w}} + \Theta_{{v}{w}}=   \dfrac{F_{{v}{w}}}{2}-F_{V}  V_{{v}{w}}  - F_{VV}  V_{{v}}  V_{{w}}  - F_{VW} V_{{v}}  W_{{w}}.
			\end{split} 
		\end{equation*}
		We conclude that $\Omega$ satisfies the equation
		\begin{equation*}
			\Omega_{ {v}{w}}=\dfrac{1}{2} {F_{VW}}  \left(   V_{v} W_{w} -V_{w} W_{v}\right).
		\end{equation*}
		Taking in consideration (\ref{eq:Vv}), we deduce that $\Omega$ satisfies the sine-Gordon type equation
		\begin{equation*}\label{eq:sinh-Gordon_specific1}
			\Omega_{{v}{w}}= - \dfrac{K_N}{2}    \sinh 2 \Omega,
		\end{equation*} 
		or equivalently
		\begin{equation*}
			\Omega_{xx}-\epsilon^2 \Omega_{yy}= - 2{K_N } \,   \sinh 2 \Omega .
		\end{equation*} 
		Recall that $K_N$ is the curvature on the target 
		\begin{equation*}\label{curvature}
			K_N
			=-2F_{V W} e^{-F}=-\dfrac{1}{2}\left(F_{RR}-{\delta^2}F_{SS} \right)\, e^{-F(R,S)}. 
		\end{equation*} 
	\end{proof}

	We shall now prove Theorem \ref{Prop:Beltrami_to_harmonic}. Note that Theorem \ref{Prop:Beltrami_to_harmonic} is the converse of Theorem \ref{th:direct}, when the map is a diffeomorphism.

	\begin{proof}[Proof of Theorem \ref{Prop:Beltrami_to_harmonic}]
		Suppose now that the diffeomorphism $u$ is such that
		\begin{equation*}\label{Beltrami2}
			\frac{V_{w}}{V_{{v}}}= e^{ -2 \Omega},
		\end{equation*}
		and 
		\begin{equation*}\label{Beltrami2}
			\frac{W_{w}}{W_{{v}}}= e^{ 2 \Omega},
		\end{equation*}
		where $\Omega=\frac{\delta}{\epsilon}\omega$ for a real valued function $\omega$ {and $V=R+\delta S$, $W=R-\delta S$.}

		These formulas imply {by \eqref{eq:Vv} that }
		\[
		R_x R_y \, -\delta^2 \, S_x S_y=0.
		\]
		
		As a consequence, {by \eqref{eq:Vv} we get}
		\[
		V_{v}  W_{v}=\frac{1}{4}\left( R_{x}^2+\frac{1}{\epsilon^2}R_{y}^2-\delta^2 S_{x}^2-\frac{\delta^2}{\epsilon^2}S_{y}^2\right)\in \mathbb{R}.
		\]
		
		Then, we observe that
		\begin{equation*}
			\dfrac{1}{V_v  W_v}=\dfrac{1}{ V_{w}  {W}_{w}}= \dfrac{e^{2\Omega}}{ V_{v} {W}_{w}}\in \mathbb{R}.
		\end{equation*}

		Assume now, without loss of generality, that $\dfrac{e^{2\Omega}}{V_{v}{W_{w}}}>0.$
		We define
		\begin{equation*}
			\label{eq:tildeF}
			e^{\tilde{F}(V,W)}= \dfrac{e^{2\Omega}}{V_{v}W_{w}}.
		\end{equation*}
		Given the fact that the map $u$ is a diffeomorphism, we can consider $\tilde{F}$ as a function of $V$ and $W$.
		
		Let us equip $N$ with the conformal metric 
		\begin{equation*}\label{eq:metric}
			\tilde{h}=e^{\tilde{F}}dVdW.
		\end{equation*} 
		Then, 
		\begin{equation*}\label{3.11a}
			e^{\tilde{F}} V_v  W_v=1,
		\end{equation*}
		thus
		\begin{equation*}
			{V_{vw}} + \tilde{F}_{V}(v,w) V_{v} V_{w}=0.
		\end{equation*}
		Similarly, we can prove that 
		\begin{equation*}
			{W_{vw}} + \tilde{F}_{W}(v,w) W_{v} W_{w}=0.
		\end{equation*}
		Thus, {by Proposition \ref{prop: eqharmonic}, the maps $u=(R,S)$, where $V=V+\delta S$, $W=R-\delta S$, is harmonic.} Then, from Theorem \ref{th:direct} follows that the corresponding curvature {on the target} is 
		\begin{equation*}\label{star}
			\tilde{K}_{N}=
			-\frac{2\Omega_{vw}}{\sinh{2\Omega}}.
		\end{equation*} 
		Therefore Theorem \ref{Prop:Beltrami_to_harmonic} is valid. 
	\end{proof}

	{We finally discuss a B{\"a}cklund transformation arising from the analysis carried out in this section. The formulas (\ref{eq:cart})-(\ref{eq:cart4})} imply that the following system  of first order  PDEs holds true:
	
	\begin{equation*}\label{eq:system_1} 
		\begin{array}{c}
			\sinh \Omega\, R_x - \dfrac{\delta}{\epsilon}\cosh \Omega\, S_y =0 
			\\
			\dfrac{1}{\epsilon}\cosh\Omega\, R_y -\delta\sinh \Omega\, S_x=0  . 
		\end{array}
	\end{equation*}
	


	
	



	Using (\ref{eq:system_1})  we can compute mixed derivatives of $R$ and $S$. Then, the compatability condition $ (R_x)_y=(R_y)_x,\; (S_x)_y=(S_y)_x$ implies the following second order {PDEs} 
	\begin{align}
		\epsilon^2 \left(  \tanh\Omega\, R_x \right)_x {-}\left(  \coth\Omega\, R_y \right)_y &=0
		\label{eq:2_system_1}
		\\
		\epsilon^2 \left(  \tanh\Omega\, S_x \right)_x {-} \left(  \coth\Omega\, S_y \right)_y &=0.
		\label{eq:2_system_2}
	\end{align}
	Then, equation (\ref{eq:2_system_1}) can be written as
	\begin{equation}\label{eq:2nd_order_R}
		\tanh\Omega\,   R_{xx}-  \dfrac{1}{\epsilon^2}\coth\Omega\,   R_{yy}
		+\dfrac{ \Omega_x\,   R_x}{ \cosh^2\,\Omega}+ \dfrac{1}{\epsilon^2}\dfrac{ \Omega_y\,   R_y}{ \sinh^2\,\Omega}=0.
	\end{equation}
	This is a  second order PDE. Given a solution $\Omega=\frac{\delta}{\epsilon}\omega$ of the generalized sine-Gordon equation {\eqref{eq:sinh-Gordon_specific1}}, 
	then one can solve equation (\ref{eq:2nd_order_R}), by using standard methods, see for example \cite[p. 72]{P-R}. In the case of a Riemannian domain surface we have that $\epsilon =i$ and the second order PDE is an elliptic one, whereas in the case of a Lorentzian domain surface we have that $\epsilon=1$ and the second order PDE is a hyperbolic one. Similar results hold for $S=S(x,y)$.

	{Using {\eqref{eq:cart}-\eqref{eq:cart4} } we can compute mixed derivatives of $R$ and $S$. Then, the compatability condition $ (R_x)_y=(R_y)_x,\; (S_x)_y=(S_y)_x$ and a lengthy calculation implies the B\"{a}cklund transformation that is given by the following proposition.}

	\begin{proposition}
		\label{prop:Baecklund}	
		The functions $\Omega, \Theta$ are such that
		\begin{equation}\label{eq:Backlund}
			\begin{array}{rl}
				\Omega_x  -\frac{1 }{\epsilon }  \Theta_y
				=& \frac{1}{2}
				F_x \tanh\, \Omega
				,\\
				\Omega_y-\epsilon  \Theta_x
				=& \frac{1}{2} F_y
				\coth\, \Omega,
			\end{array}
		\end{equation}
		where
		\begin{align*}
			F_x&=  F_{R}  R_x  + F_{S} S_x \\
			F_y&=  F_{R}  R_y  + F_{S} S_y .
		\end{align*}
	\end{proposition}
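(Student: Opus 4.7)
The plan is to derive the two Bäcklund identities directly from the compatibility relations $(R_x)_y=(R_y)_x$ and $(S_x)_y=(S_y)_x$ applied to the first-order expressions (\ref{eq:cart})--(\ref{eq:cart4}). Since $F=F(R,S)$, the chain rule produces $F_x=F_R R_x+F_S S_x$ and $F_y=F_R R_y+F_S S_y$ automatically whenever $e^{-F/2}$ is differentiated, so these are exactly the quantities appearing in the statement.

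First, I would differentiate $R_x=2e^{-F/2}\cosh\Omega\cosh\Theta$ with respect to $y$ and $R_y=2\epsilon e^{-F/2}\sinh\Omega\sinh\Theta$ with respect to $x$. After cancelling the common factor $2e^{-F/2}$, the equality $(R_x)_y=(R_y)_x$ reduces to the scalar identity $(\mathrm{R})$
\begin{align*}
&-\tfrac{1}{2}F_y\cosh\Omega\cosh\Theta+\Omega_y\sinh\Omega\cosh\Theta+\Theta_y\cosh\Omega\sinh\Theta \\
&=\epsilon\bigl[-\tfrac{1}{2}F_x\sinh\Omega\sinh\Theta+\Omega_x\cosh\Omega\sinh\Theta+\Theta_x\sinh\Omega\cosh\Theta\bigr].
\end{align*}
Repeating the same procedure with $S_x$ and $S_y$ yields a companion identity $(\mathrm{S})$ in which the roles of $\cosh\Theta$ and $\sinh\Theta$ are interchanged throughout.

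The crucial step is to decouple the $\Theta$-dependence through two complementary linear combinations. Forming $\cosh\Theta\cdot(\mathrm{R})-\sinh\Theta\cdot(\mathrm{S})$ collapses every $\Theta$-bearing coefficient via $\cosh^2\Theta-\sinh^2\Theta=1$ while the $\Theta_y$ contributions cancel, leaving $\Omega_y\sinh\Omega-\tfrac12 F_y\cosh\Omega=\epsilon\Theta_x\sinh\Omega$; dividing by $\sinh\Omega$ gives the second Bäcklund relation $\Omega_y-\epsilon\Theta_x=\tfrac12 F_y\coth\Omega$. Symmetrically, $\sinh\Theta\cdot(\mathrm{R})-\cosh\Theta\cdot(\mathrm{S})$ kills the $\Omega_y$ and $F_y$ terms and leaves $-\Theta_y\cosh\Omega=\epsilon\bigl[\tfrac12 F_x\sinh\Omega-\Omega_x\cosh\Omega\bigr]$; dividing by $\cosh\Omega$ produces the first Bäcklund relation $\Omega_x-\tfrac{1}{\epsilon}\Theta_y=\tfrac12 F_x\tanh\Omega$.

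There is no deep obstacle, only careful bookkeeping: the content is choosing the two combinations that force the $\Theta$-factors to collapse via $\cosh^2-\sinh^2=1$. The nontriviality hypothesis together with the non-vanishing of the Jacobian, which from (\ref{eq:cart})--(\ref{eq:cart4}) works out to a nonzero scalar multiple of $\cosh\Omega\sinh\Omega$, ensures that the final divisions by $\sinh\Omega$ and $\cosh\Omega$ are legitimate and that both $\coth\Omega$ and $\tanh\Omega$ are well defined.
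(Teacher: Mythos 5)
Your proposal is correct and follows exactly the route the paper indicates: the paper states only that the compatibility conditions $(R_x)_y=(R_y)_x$, $(S_x)_y=(S_y)_x$ applied to \eqref{eq:cart}--\eqref{eq:cart4} plus ``a lengthy calculation'' yield \eqref{eq:Backlund}, and your two linear combinations $\cosh\Theta\cdot(\mathrm{R})-\sinh\Theta\cdot(\mathrm{S})$ and $\sinh\Theta\cdot(\mathrm{R})-\cosh\Theta\cdot(\mathrm{S})$ are precisely that calculation, carried out correctly (including the justification of the divisions by $\sinh\Omega$ and $\cosh\Omega$ via the nonvanishing Jacobian, which equals $\tfrac{4\epsilon}{\delta}e^{-F}\cosh\Omega\sinh\Omega$).
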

	We will study in more detail this B\"{a}cklund transformation in Section \ref{sec:Backlund}. Several examples of non trivial harmonic maps in the case of Riemannian surfaces are given in \cite{FotDask}. In this article we generalize this construction to include the case when the target and/or the donain is a Lorentz surface.

	The strategy to find solutions of the harmonic map problem is to take a solution of the generalized sine-Gordon equation, find a solution of the Beltrami equations and find the metric on $N$ of constant curvature.

	
	\section{Constant curvature spaces, one-soliton solution}\label{sec:Constant}
	
	In this section we shall apply our main results in the case when $N$ is of constant curvature $K_{N}$. We construct a new family of harmonic maps using one-soliton solutions of the generalized sine-Gordon equation.

	In the specific coordinates (\ref{eq:specific}), the generalized sine-Gordon equation is 
	\[
	\Omega_{x x} +\frac{1}{\epsilon^2}\Omega_{y y}=-\frac{K_N}{2} \sinh 2 \Omega, \mbox{ where } K_N \mbox{ is constant}.
	\]
	The general solution for this equation is not known and there are only partial solutions. Of particular interest is the so called one-soliton solution, defined by:
	\[
	\Omega=\Omega( \gamma y-\beta x  ).
	\]
	We shall first find the one-soliton solutions of the generalized sine-Gordon equation and then we shall find the corresponding harmonic maps. Without loss of generality, we shall assume that $\gamma^2 -\epsilon^2 \beta^2=\rho^2>0$.  Set
	\[
	\gamma=\rho \cosh(\epsilon\tau)  \mbox{  and  }  \beta=\frac{\rho}{\epsilon} \sinh(\epsilon\tau).
	\]

	In order to simplify the calculations, we shall introduce a new system of coordinates $(X,Y)$:
	\begin{equation}\label{defn rho and tau}
		\begin{array}{l}
			X=  \gamma x - \epsilon^2 \beta y= x \rho \cosh(\epsilon\tau)-{y\epsilon  \rho} \sinh(\epsilon\tau)
			\\
			Y= -\beta x +\gamma y=- {x\dfrac{\rho}{\epsilon}} \sinh(\epsilon\tau)+ y \rho \cosh(\epsilon\tau).
		\end{array}
	\end{equation} 
	These coordinates are given by a contraction and an appropriate  rotation of the {Riemann (or the Lorentz)} domain surface.
	In these coordinates, $\Omega=\Omega(Y)$ and the generalized sine-Gordon equation is written
	\begin{equation*}\label{eq:mod_Sinh_G}
		\dfrac{  d^2 \Omega}{dY^2}=\dfrac{ 2K_N \epsilon^2}{\rho^2} \sinh 2\Omega  \; \mbox{  or  }  \;
		\left(\dfrac{  d \Omega}{dY}\right)^2=
		C
		+ \dfrac{ 4 K_N \epsilon^2} { \rho^2}
		\sinh^2 \,\Omega.
	\end{equation*}
	
	Case I: {$C\ne 0$.} 
	
	The above equation can be written as 
	{\begin{equation*}
			\label{eq:der_sinh_omega_equation}
			\left(\dfrac{d \sinh\,\Omega}{d(\sqrt{C}Y)} \right) ^2 = ( 1+ \sinh^2 \Omega)  \left(1+ \dfrac{ 4 K_N \epsilon^2} {C \rho^2}  
			\sinh^2 \Omega\right).
	\end{equation*}} 
	Set
	\begin{equation*}
		C=  (\dfrac{\delta}{\epsilon} c)^2.
	\end{equation*}
	The definitions and properties of the Jacobi elliptic functions are used in this paper. For more details, see for example \cite{DLMF} and \cite{MilThom64}.  The Jacobi elliptic function $sn$  is defined by the formula
\begin{equation}\label{eq:Jacobi_ampli}
\int_0^x \, \dfrac{dt}{   \sqrt{ \left( 1-t^2\right)\left( 1 -n t^2\right)}}=
sn^{-1}(x\vert n).
\end{equation}

	From \cite[Equation (22.13.9)]{DLMF}
	we have that
	\begin{equation}\label{eq:sinh_omega}
		\begin{array}{c}
			\sinh  \Omega  = 
			\mathrm{sc}(\dfrac{\delta}{\epsilon} c  \,
			(Y-Y_0 +\psi)|m) ,\\
			\mbox{  where } m= 1-\dfrac{ 4 K_N \epsilon^2} {C \rho^2}=
			1-\dfrac{ 4 K_N \delta^2} {c^2 \rho^2}.
		\end{array}
	\end{equation}
	
	For simplicity, we shall only consider the case that $c$ is a positive real number.

	Let 
	\begin{equation}\label{defn of phi}
		\phi= \dfrac{\delta}{\epsilon} c \, (Y-Y_0 +\psi).
	\end{equation} 
	The parameter $Y_0$ corresponds to the choice of the initial conditions. 
	From  \cite[Equations (22.6.2)  and (22.6.3)]{DLMF}, we conclude that
	\begin{align*}
		\cosh\, \Omega & =   \mathrm{nc}  (\phi|m) \\
		\tanh\, \Omega & =   \, \mathrm{sn}(\phi|m) \\
		\Omega'	&=\dfrac{\delta}{\epsilon}  \; \mathrm{dc}(\phi|m).
	\end{align*}
	Set
{	\begin{equation*}\label{constants}
			\Omega_0=\Omega(Y_0), \quad \Omega '_0=\Omega '(Y_0).
	\end{equation*}
}
	We are now ready to compute the harmonic maps {$u=(R,S)$} that correspond to the one-soliton solutions of the generalized sine-Gordon equation.
	
	Equation (\ref{defn rho and tau}) implies that:
	\begin{equation*}
		\begin{array}{rl}
			\dfrac{\partial}{\partial \, X}=
			&  \dfrac{\cosh(\epsilon \tau)}{\rho} \dfrac{\partial}{\partial \, x}\,+\, \dfrac{\sinh(\epsilon \tau)}{\epsilon \rho} \dfrac{\partial}{\partial \, y}\\
			\\
			\dfrac{\partial}{\partial \, Y}=&  \dfrac{\epsilon\sinh(\epsilon \tau)}{\rho} \dfrac{\partial}{\partial \, x}\,+\, \dfrac{\cosh(\epsilon \tau)}{ \rho} \dfrac{\partial}{\partial \, y}.
		\end{array}
	\end{equation*}
	Therefore {\eqref{eq:cart}-\eqref{eq:cart4} imply} that:
	{
		\begin{equation*}\label{eq:cartesian_XY}
			\begin{array}{rl}
				R_X=&\frac{1}{\rho} \,e^{-F/2}
				\left(
				e^{\Theta } \cosh (\Omega +\epsilon \tau)+e^{-\Theta } \cosh (\Omega -\epsilon \tau)
				\right)\\
				R_Y=&\frac{\epsilon}{\rho} \, e^{-F/2}
				\left(
				e^{\Theta } \sinh (\Omega +\epsilon \tau)-e^{-\Theta } \sinh (\Omega -\epsilon \tau)
				\right)\\ 
				S_X=&\frac{1}{\delta \rho}\,e^{-F/2}
				\left(
				e^{\Theta } \cosh (\Omega +\epsilon \tau) - e^{-\Theta } \cosh (\Omega -\epsilon \tau)
				\right)\\ 
				S_Y=&\frac{\epsilon}{\delta \rho} \,e^{-F/2}
				\left(
				e^{\Theta } \sinh (\Omega +\epsilon \tau)+ e^{-\Theta } \sinh (\Omega -\epsilon \tau)
				\right).
			\end{array}
		\end{equation*}
	}
	

	
	We are interested in a solution satisfying the constraints 
	{\[
		R_X=1  \quad \mbox{and} \quad S_X=0 .
		\]}
	In this case we have
	
	\begin{align*}	e^{\Theta } \cosh (\Omega +\epsilon \tau) - e^{-\Theta } \cosh (\Omega -\epsilon \tau)&=0 \\
		\frac{e^{-F/2}}{\rho}
		e^{\Theta } \cosh (\Omega +\epsilon \tau)+\frac{e^{-F/2}}{\rho}
		e^{-\Theta } \cosh (\Omega -\epsilon \tau)&=1
	\end{align*}	
	and the last two equations imply that
	\begin{equation}
		e^F=  \dfrac{4}{\rho^2}  \cosh\left( \Omega+\epsilon \tau \right) \cosh\left( \Omega-\epsilon \tau \right)= 
		\dfrac{2}{\rho^2}   \left( 
		\cosh\left(  2\, \Omega \right)
		+\cosh\left(  2\,\epsilon \tau \right) \right) .
		\label{eq:soliton_F}
	\end{equation}
	Furthermore,
	\begin{equation}\label{eq:RY_SY}
		\begin{array}{rl}
			R_Y &=\frac{\epsilon}{2}
			\left(   \tanh \left( \Omega+ \epsilon \tau \right) - \tanh \left( \Omega- \epsilon \tau \right)\right)= \epsilon \, \dfrac{  \sinh (2 \epsilon \tau)}{\cosh (2
				\epsilon \tau)+\cosh (2  \Omega)},
			\\
			S_Y &=\frac{\epsilon}{2 \delta}
			\left(   \tanh \left( \Omega+ \epsilon \tau \right) + \tanh \left( \Omega- \epsilon \tau \right)\right)=\frac{\epsilon}{ \delta}  \dfrac{  \sinh (2 \Omega)}{\cosh (2
				\epsilon \tau)+\cosh (2  \Omega)}.
		\end{array}
	\end{equation}
	
	From equation (\ref{eq:sinh_omega}) {it} follows that
	\begin{align*}
		\cosh(2\epsilon \tau)+\cosh(2 \Omega)&= 2 
		\left( \mathrm{sc}^2\left( \phi|m\right) +\cosh^2(\epsilon \tau)\right)\\
		&=
		\dfrac{2}{1-m}\left( \mathrm{dc}^2(\phi|m)-\mathcal{N}^2
		\right),
	\end{align*}
	where
	\[
	\mathcal{N}^2=1-(1-m)  \sinh^2(\epsilon\tau),
	\]
	and $\phi$ as in (\ref{defn of phi}).
	Then 
	\[
	\mathrm{e}^F= \dfrac{4}{\rho^2} \left( \mathrm{sc}^2\left( \phi|m\right) +\cosh^2(\epsilon \tau)\right)=
	\dfrac{4}{\rho^2 (1-m)}\left( \mathrm{dc}^2(\phi|m)-\mathcal{N}^2
	\right)
	>0.
	\]
	We shall now find $S$. From Equation (\ref{eq:RY_SY}) we deduce that
	\[
	S_Y= \dfrac{\epsilon}{\delta} 
	\dfrac{\cosh\Omega  \sinh\Omega}{  \sinh^2\Omega+  \cosh^2(\epsilon \tau)}=
	\dfrac{\epsilon}{\delta} \; 
	\dfrac{\mathrm{nc}(\phi|m)  \mathrm{sc}(\phi|m)}{  \mathrm{dc}^2(\phi|m) -  \mathcal{N}^2}                                                                                                                                                                                                                                                                                                                                                                                                                                                                                                                                                                                                                                                                                                                                                                                                                                                                                                                                                                                                                                                                                                                                                                                                                                                                                                                                                                                                                                                                                                                                                                                                                                                                                                                                                                                                                                                                                                                                                                                                                                                                                                                                                                                                                                                                                                                                                                                                                                                                                                                                                                                                                                                                                                                                                                                                                                                                                                                                                                                                                                                                                                                                                                                                                                                                                                                                                                                                                                                                                                                                                                                                                                                                                                                                                                                                                                                                                                                                                                                                                                                                                                                                                                     .                                                                                                                                                                                                                                                                                                                                                                                                                                                                                                                                                                                                                                                                                                                                                                                                                                                                                                                                                                                                                                                                                                                                                                                                                                                                                                                                                                                                                                                                                                                                                                 \]

	We conclude that
	\[
	S_Y= \dfrac{\epsilon}{\delta} \;  
	\dfrac{\mathrm{dc}'(\phi|m)}{\mathrm{dc}^2(\phi|m) -  \mathcal{N}^2},
	\]
	
	hence integrating
	\begin{equation*} \label{eq:S}
		S-S_0= - \left. \dfrac{1}{\left( \dfrac{\delta}{\epsilon}\right) ^2 c\mathcal{N}}
		\mathrm{arctanh}
		\left(
		\dfrac{\mathrm{dc}(\phi|m)}{\mathcal{N}}
		\right)\right| ^{\phi= \frac{\delta}{\epsilon} c \, (Y-Y_0 +\psi)}_{\phi= \frac{\delta}{\epsilon} c \, \psi}.
	\end{equation*}

	We shall now describe the metric $e^{F(R,S)}$. From (\ref{eq:soliton_F})   we find that
	
	\begin{equation*}
		\label{eq:F_soliton_S}
		e^F= {\dfrac{4  \mathcal{N}^2}{\rho^2 {(m-1)}}  \dfrac{1}{\cosh^2\left( \left( \dfrac{\delta}{\epsilon}\right) ^2{c\,  \mathcal{N}}(S-\Sigma_0)  \right)}},
	\end{equation*} 
	where
	\[
	\Sigma_0 = S_0 +  
	\dfrac{1}{\left( \dfrac{\delta}{\epsilon}\right) ^2\,c\,  \mathcal{N}}\arctanh \dfrac{\mathrm{dc}(\dfrac{\delta}{\epsilon} c \, \psi |m)}{\mathcal{N}}.
	\]

	We shall now find $R$. From (\ref{eq:RY_SY}), we deduce that
	\[
	R_Y
	=-\epsilon  \dfrac{\sinh (2 \epsilon \tau)}{\mathcal N ^2}
	\dfrac{1}{1-\dfrac{1}{\mathcal N ^2}  \mathrm{dc}^2(\dfrac{\delta}{\epsilon} c \, (Y-Y_0 +\psi) |m)} {{\frac{(1-m)}{2}}}.
	\]
	Integrating and using the identity
	\[
	dc(z|m)=sn(z+K|m) , \mbox{ where }\; K=K(m)= \int_{0}^{\frac{\pi}{2}} 
	\dfrac{d\theta}{\sqrt{ 1 -{m} \sin^2
			\theta}},
	\]

	we find that
{	\begin{align*}\label{eq:R}
			R-R_0&=X-X_0-\dfrac{\epsilon^2}{\delta}\dfrac{\sinh (2 \epsilon \tau)}{ c\, \mathcal N ^2}{{\frac{(1-m)}{2}}} \times \notag\\
			&\times\Bigg(
			\Pi\left( \dfrac{1}{\mathcal N ^2};\dfrac{\delta}{\epsilon} c\,(Y-Y_0+\psi) +K  |m \right) 
			-
			\Pi\left( \dfrac{1}{\mathcal N ^2}; \dfrac{\delta}{\epsilon} c\,\psi +K  |m \right)
			\Bigg) ,
	\end{align*}
}
	where we used the definition formula of the third kind elliptic integral,
	\[
	\Pi\left( n;x|m\right) = \int_{0}^{x} \, \dfrac{du}{1-n\, \mathrm{sn}^2(u|m)}.
	\]

	Note that the metric (Riemannian or Lorentzian) on the target manifold $N$ is of constant curvature and that the results in Section \ref{sec:Constant} cover all cases of positive, negative and zero constant curvature in a unified formulation. 
	
	Case II: $C=0$. 
	
	In this case the generalized sine-Gordon equation is written as
	\[
	\left(\dfrac{  d \Omega}{dY}\right)^2=
	\dfrac{ 4 K_N \epsilon^2} { \rho^2}
	\sinh^2 \,\Omega,
	\]
	from which we deduce that $\frac{K_{N}}{\epsilon^2}>0.$ 
	
	A case by case study reveals that if $\frac{\delta^2}{\epsilon^2}=1$ then $\tanh{\omega}=\frac{1}{\cosh{(\frac{2}{\rho}\sqrt{\frac{K_{N}}{\epsilon^2}}Y)}}$ and if $\frac{\delta^2}{\epsilon^2}=-1$ then $\tan{\omega}=\frac{1}{\sinh{(\frac{2}{\rho}\sqrt{\frac{K_{N}}{\epsilon^2}}Y)}}$.
	
	Moreover, if $\frac{\delta^2}{\epsilon^2}=1$ then \[\tan{(\frac{2}{\rho}\sqrt{\frac{K_{N}}{\epsilon^2}}\cosh(\epsilon\tau )S)}=\cosh(\epsilon\tau ) \sinh{(\frac{2}{\rho}\sqrt{\frac{K_{N}}{\epsilon^2}}Y)}\]
	and
	\[
	e^{F}=\frac{4\cosh^2(\epsilon\tau )}{\rho^2 \sin^2{(\frac{2}{\rho}\sqrt{\frac{K_{N}}{\epsilon^2}}\cosh(\epsilon\tau )S)}}.
	\]
	
	If $\frac{\delta^2}{\epsilon^2}=-1$ then \[\coth{(\frac{2}{\rho}\sqrt{\frac{K_{N}}{\epsilon^2}}\cosh(\epsilon\tau )S)}=\cosh(\epsilon\tau ) \cosh{(\frac{2}{\rho}\sqrt{\frac{K_{N}}{\epsilon^2}}Y)}\]
	and
	\[
	e^{F}=\frac{4\cosh^2(\epsilon\tau )}{\rho^2 \cosh^2{(\frac{2}{\rho}\sqrt{\frac{K_{N}}{\epsilon^2}}\cosh(\epsilon\tau )S)}}.
	\]

	\section{B\"acklund Transform of the generalized sine-Gordon equation}\label{sec:Backlund}
	
	In this section we discuss a B\"acklund transform that provides a connection between the solutions of two sine-Gordon type equations. We provide a new harmonic map by using the B\"acklund transform of a solution to the generalized sine-Gordon equation. 
	
	The \textit{B\"acklund transformation} is a system of first order partial differential equations relating the solution of a PDE   (in our case the generalized sine-Gordon PDE) to the solution of  another PDE (in our case the  PDE (\ref{eq:theta_zeta_barzeta})). Then, the one solution is said to be the \textit{B\"acklund transform} of the other. 
	
	A direct consequence of Proposition \ref{prop:Baecklund} is the following proposition. 
	\begin{proposition}
		The  equation  
		\begin{equation}\label{eq:theta_zeta_barzeta}
			2  \Theta_{vw}  =
			e^{-F}
			\left(     \left( F_V^2-F_{VV}\right) e^{2  \Theta}   -  \left( F_{W}^2-F_{WW}\right) e^{-2  \Theta} \right)
		\end{equation}
		is the B\"acklund transform of the equation
		\begin{equation*}\label{eq:sinh-G_zeta}
			\Omega_{vw}=-\frac{
				K_N}{2} \sinh ({2\Omega}),
		\end{equation*}
		where $\Theta$ and $\Omega$ are related by (\ref{eq:Backlund}).
	\end{proposition}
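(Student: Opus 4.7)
The plan is to derive the second-order equation for $\Theta$ by differentiating one of the scalar B\"acklund identities and then invoking the generalized sine-Gordon equation to eliminate the residual $\Omega_{vw}$ term. In the characteristic coordinates $(v,w)$, the B\"acklund system (\ref{eq:Backlund}) is equivalent to the pair of scalar identities (\ref{eq:pOmegapTheta})--(\ref{eq:mOmegapTheta}) established inside the proof of Theorem \ref{th:direct}; in particular
\begin{equation*}
(\Omega+\Theta)_{w} \;=\; \tfrac{F_{w}}{2} - F_{V}\,V_{w}.
\end{equation*}
This is the cleanest starting point, and avoids having to revisit the passage from $(x,y)$ to $(v,w)$.

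I would then apply $\partial_{v}$ to this identity. Expanding $F_{w}=F_{V}V_{w}+F_{W}W_{w}$ by the chain rule and substituting the harmonic-map equations $V_{vw}=-F_{V}V_{v}V_{w}$ and $W_{vw}=-F_{W}W_{v}W_{w}$ wherever a mixed derivative of $V$ or $W$ appears, a mechanical collection of terms yields
\begin{equation*}
\Omega_{vw}+\Theta_{vw} \;=\; \tfrac{1}{2}\bigl(F_{V}^{2}-F_{VV}\bigr)V_{v}V_{w} \;-\; \tfrac{1}{2}\bigl(F_{W}^{2}-F_{WW}\bigr)W_{v}W_{w} \;+\; \tfrac{1}{2}F_{VW}\bigl(V_{v}W_{w}-V_{w}W_{v}\bigr).
\end{equation*}

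The last step is to re-express the right-hand side by means of the parametrization (\ref{eq:Vv}), which gives $V_{v}V_{w}=e^{-F}e^{2\Theta}$, $W_{v}W_{w}=e^{-F}e^{-2\Theta}$, and $V_{v}W_{w}-V_{w}W_{v}=2e^{-F}\sinh(2\Omega)$. The first two contributions then produce exactly one half of the right-hand side of (\ref{eq:theta_zeta_barzeta}), while the third contribution becomes $F_{VW}e^{-F}\sinh(2\Omega)$, which equals $-\tfrac{K_{N}}{2}\sinh(2\Omega)=\Omega_{vw}$ by the identity $K_{N}=-2F_{VW}e^{-F}$ together with the generalized sine-Gordon equation. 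This cancels the $\Omega_{vw}$ on the left-hand side and delivers (\ref{eq:theta_zeta_barzeta}).

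The main technical obstacle is the careful chain-rule bookkeeping at the differentiation step: one must track how $F_{VW}$ enters \emph{both} from the expansion of $F_{vw}$ and from the derivative of $F_{V}$ inside $\partial_{v}(F_{V}V_{w})$, and verify that the surviving combination is the antisymmetric $V_{v}W_{w}-V_{w}W_{v}$ rather than the symmetric one. It is precisely this antisymmetric combination that the sine-Gordon equation is tailored to absorb, so once it is correctly isolated the cancellation with $\Omega_{vw}$ on the left-hand side is immediate and nothing further is required.
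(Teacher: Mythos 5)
Your proof is correct and is essentially the computation the paper intends when it calls the proposition ``a direct consequence of Proposition \ref{prop:Baecklund}'': you differentiate the identity $(\Omega+\Theta)_w=\tfrac{F_w}{2}-F_VV_w$ from the proof of Theorem \ref{th:direct}, substitute the harmonic-map equations, and let the generalized sine-Gordon equation absorb the antisymmetric $F_{VW}\bigl(V_vW_w-V_wW_v\bigr)$ term. One small point worth noting: your substitutions $V_vV_w=e^{-F}e^{2\Theta}$, $W_vW_w=e^{-F}e^{-2\Theta}$, $V_vW_w-V_wW_v=2e^{-F}\sinh(2\Omega)$ are the ones consistent with the normalization $e^FV_vW_v=e^FV_wW_w=1$ of the specific coordinates (the displayed formulas \eqref{eq:Vv} carry a spurious factor of $2$), and it is this normalization that produces exactly the constants in \eqref{eq:theta_zeta_barzeta}.
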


	One interesting remark arising directly from the above, is the following.
	\begin{remark}
		Consider the case of the metric
		\[
		e^F=\frac{1}{(aR+\delta^2 b S)^2}.
		\]
		Then, $F_{V}=-(a+\delta b) e^{\frac{F}{2}}, F_{W}=-(a-\delta b)  e^{\frac{F}{2}}$. Thus, $K_N=-a^2+\delta^2 b^2$ and the function $\Theta$ satisfies the generalized sine-Gordon type equation
		\begin{equation*}\label{generalizedGordon}
			\Theta_{xx}-\epsilon^2 \Theta_{yy}={{(a+\delta b) }^2 e^{2\Theta }-{{(a-\delta b) }}^2 e^{-2\Theta }}.
		\end{equation*}

		The B{\"a}cklund transform can be written as follows
		\begin{equation*}
			\begin{array}{rl}
				\Omega_x  -\frac{1 }{\epsilon }  \Theta_y
				=& -2\sinh \Omega \left(a \cosh \Theta + \delta b \sinh \Theta \right)
				,\\
				\Omega_y-\epsilon  \Theta_x
				=& -2
				\cosh \Omega \left(a \sinh \Theta + \delta b \cosh \Theta \right).
			\end{array}
		\end{equation*}
		For example, if $\delta=\epsilon=1, a=1, b=0$ then we get an auto-B{\"a}cklund transform of the equation 
		\[
		\omega_{xx}- \omega_{yy}=  2 \sinh (2 \omega),
		\]
		and if $\delta=i, \epsilon=1, a=1, b=0$ then we get an auto-B{\"a}cklund transform of the equation 
		\[
		\omega_{xx}- \omega_{yy}=  2 \sin (2 \omega).
		\]
	\end{remark}
	
	As an application, let us give the following example.
	
	\begin{example}
		Consider
		\[
		\Theta(x,y)=\arcsinh{\left(\tanh{2\delta x}\right)},
		\] 
		that is a one-soliton solution  of the equation 
		\begin{equation*}
			\Theta_{xx}-\epsilon^2 \Theta_{yy}={2\delta^2 \sinh{2\Theta}}.
		\end{equation*}
		Then, 
		\[
		\Omega(x,y)=2 \arctanh{\frac{\cosh{(2\delta x)}}{2\delta \epsilon y}}
		\]
		is the B{\"a}cklund transform  of $\Theta$ and satisfies the equation
		\[
		\Omega_{xx}-\epsilon^2 \Omega_{yy}={-2\delta^2 \sinh{2\Omega}}.
		\]
		
		Then, a harmonic map $u=R+iS$ associated to the solution $\Omega$ is $u=(R,S),$ where
		\begin{equation*}
			\begin{array}{rl}
				R(x,y)
				=& \delta \epsilon^2 y^2 \tan(2\delta x)+\frac{x}{2}
				,\\
				S(x,y)
				=& y^2\frac{\epsilon^2}{\cos{2\delta x}}-\frac{\cos{2\delta x}}{2\delta^2},
			\end{array}
		\end{equation*}
		where $\epsilon,\, \delta \in \{1, i\}.$
		
		Finally, observe that the metric on the target $N$ is given by \[h=\frac{1}{S^2}\left( dR^2-\delta^2 dS^2\right).\]
		The curvature of this metric is $K_{N}=\delta^2$. 
		
		The metric on the domain $M$ is of the form \[ g=e^{f(x,y)}\left( dx^2-\epsilon^2 dy^2\right).\]
	\end{example}

\section{Perspectives for future investigation}\label{sec:prespectives}

Firstly, an application of the methods introduced in this paper is to find new solutions to the generalized sine-Gordon equation and the corresponding harmonic maps. The generalized sine-Gordon equation is interesting on its own right and we plan to construct new families of solutions. The application of the ideas exposed in this paper in multidimensional cases is under current investigation.

\end{document}